\newtheorem{lemma}{Lemma}[section]
\newtheorem{definition}{Definition}[section]
\newtheorem{example}{Example}[section]
\newtheorem{theorem}{Theorem}[section]
\newtheorem{remark}{Remark}[section]
\begin{document}

\title[Relations between minimal usco and minimal cusco  maps]{Relations between minimal usco and minimal cusco  maps}
\author[\v Lubica Hol\'a \and Du\v san Hol\'y]{\v Lubica Hol\'a* \and Du\v san Hol\'y**}

\newcommand{\acr}{\newline\indent}

\address{\llap{*}Academy of Sciences, Institute of Mathematics \acr \v
Stef\'anikova 49,
81473 Bratislava, Slovakia
\acr Slovakia}

\email{hola@mat.savba.sk}

\address{\llap{**\,} Department of Mathematics and Computer Science, Faculty of Education, Trnava University, \acr
Priemyseln\'a 4,
918 43 Trnava, Slovakia
\acr Slovakia}

\email{dusan.holy@truni.sk}

\thanks{}

\subjclass[2010]{Primary 54C60; Secondary 54B20}
\keywords{minimal cusco map, minimal usco map, quasicontinuous function, subcontinuous function, set-valued mapping, selection, extreme function.
Both authors would like to thank to grant APVV-0269-11 and \v L. Hol\'a also to grant Vega 2/0018/13.}

\begin{abstract}
In our paper we give a characterization of (set-valued) maps  which are minimal usco and minimal cusco simultaneously. Let $X$ be a topological space and $Y$ be a Banach space. We  show that there is a bijection  between the space $MU(X,Y)$ of minimal usco maps from $X$ to $Y$   and the space $MC(X,Y)$ of minimal cusco maps from $X$ to $Y$ and we study this bijection with respect to various topologies on underlying spaces. Let $X$ be a Baire space and $Y$ be a Banach space. Then $(MU(X,Y),\tau_U)$ and $(MC(X,Y),\tau_U)$ are homeomorphic, where $\tau_U$ is the topology of uniform convergence.
\end{abstract}

\maketitle

\section{Introduction}

\bigskip

\bigskip

The acronym usco (cusco) stands for a (convex) upper semicontinuous non-empty compact-valued set-valued map. Such set-valued maps are interesting because they describe common features of maximal monotone operators, of the convex subdifferential and of Clarke generalized gradient. Examination of cuscos and uscos leads to serious insights into the underlying topological properties of the convex subdifferential and the Clarke generalized gradient. (It is known that Clarke subdifferential of a locally Lipschitz function and, in particular, the subdifferential of a convex continuous functions are weak* cuscos.) (see [BZ1])

In our paper we are interested in minimal usco and minimal cusco maps. We give a characterization of such maps which are minimal usco and minimal cusco simultaneously. We also show that there is a bijection between the space of minimal usco maps  and the space of minimal cusco maps and we study this bijection under the topologies of pointwise convergence, uniform convergence on compacta, uniform convergence and under the Vietoris topology.

Minimal usco and minimal cusco maps are used in many papers (see [BZ1], [BZ2], [DL], [GM], [HH1], [HH3], [HoM], [Wa]). Historically, minimal usco maps seem to have appeared first in complex analysis (in the second half of the 19th century), in the form of a bounded holomorphic function and its "cluster sets", see e.g. [CL]. Minimal usco maps are a very convenient tool in the theory of games (see [Ch]) or in functional analysis (see [BM]), where a differentiability property of single-valued functions is characterized by their Clarke subdifferentials being minimal cuscos.

\rm

\bigskip
\section{Minimal usco and minimal cusco maps}
\bigskip
In what follows let $X, Y$ be Hausdorff topological spaces, $\Bbb R$ be the space of real numbers with the usual metric and $Z^+$ be the set of positive integers. Also, for $x \in X$, $\mathcal U(x)$ is always used to denote a base of open neighborhoods of $x$ in $X$. The symbol $\overline A$ and $Int A$ will stand for the closure and interior of the set $A$ in a topological space.

A set-valued map, or a multifunction, from $X$ to $Y$ is a function that assigns to each element of $X$ a subset of $Y$. If $F$ is a set-valued map from $X$ to $Y$, then its graph is the set  $\{(x,y) \in X \times Y: y \in F(x)\}$. Conversely, if $F$ is a subset of $X \times Y$ and $x \in X$, define $F(x) = \{y \in Y: (x,y) \in F\}$. Then we can assign to each subset $F$ of $X \times Y$ a set-valued map which takes the value $F(x)$ at each point $x \in X$ and which graph is $F$. In this way, we identify set-valued maps with their graphs. Following [DL] the term map is reserved for a set-valued map.

Notice that if $f: X \to Y$  is a single-valued function, we will use the symbol $f$ also for the graph of $f$.

\bigskip
Given two maps $F, G: X \to Y$, we write $G \subset F$ and say that $G$ is contained in $F$ if $G(x) \subset F(x)$ for every $x \in X$.

A map $F: X \to Y$ is upper semicontinuous at a point $x \in X$ if for every open set $V$ containing $F(x)$, there exists $U \in \mathcal U(x)$ such that
\bigskip

\centerline{$F(U) = \cup \{F(u):u \in U\} \subset V.$}
\bigskip

$F$ is upper semicontinuous if it is upper semicontinuous at each point of $X$. Following Christensen [Ch] we say, that a map $F$ is usco if it is upper semicontinuous and takes nonempty compact values. A map $F$ from a topological space $X$ to a linear topological space $Y$ is cusco if it is usco and $F(x)$ is convex for every $x \in X$.

Finally, a map $F$ from a topological space $X$ to a topological (linear topological space) $Y$ is said to be minimal usco (minimal cusco) if it is a minimal element in the family of all usco (cusco) maps (with domain $X$ and range $Y$); that is, if it is usco (cusco) and does not contain properly any other usco (cusco) map from $X$ into $Y$. By an easy application of the Kuratowski-Zorn principle we can guarantee that every usco (cusco)  map from $X$ to $Y$ contains a minimal usco (cusco) map from $X$ to $Y$ (see [BZ1], [BZ2], [DL]).

Other approach to minimality of set-valued maps can be found in [Ma] and [KKM].

In the paper [HH1] we can find an interesting characterization of minimal usco maps using quasicontinuous and subcontinuous selections, which will be also useful for our analysis.

\bigskip

A function $f: X \to Y$ is quasicontinuous at $x \in X$ [Ne] if for every neighborhood $V$ of $f(x)$ and every $U \in \mathcal U(x)$ there is a nonempty open set $G \subset U$ such that $f(G) \subset V$. If $f$ is quasicontinuous at every point of $X$, we say that $f$ is quasicontinuous.

The notion of quasicontinuity was perhaps the first time used by R. Baire in [Ba] in the study of points of separately continuous functions. As Baire indicated in his paper [Ba] the condition of quasicontinuity has been suggested by Vito Volterra. There is a rich literature concerning the study of quasicontinuity, see for example [Ba], [HP], [Ke], [KKM], [Ne]. A condition under which the pointwise limit of a sequence of quasicontinuous functions is quasicontinuous was studied in [HH2].

A function $f: X  \to Y$ is subcontinuous at $x \in X$ [Fu] if for every net $(x_i)$ convergent to $x$, there is a convergent subnet of $(f(x_i))$. If $f$ is subcontinuous at every $x \in X$, we say that $f$ is subcontinuous.

Let $F:X \to Y$ be a set-valued map. Then a function $f:X \to Y$ is called a selection of $F$ if $f(x) \in F(x)$ for every $x \in X$.

It is well known that every selection of a usco map is subcontinuous ([HH1], [HN]).

\begin{theorem} (see Theorem 2.5 in [HH1])  Let $X, Y$ be topological spaces and $Y$ be a $T_1$ regular space. Let $F$ be a map from $X$ to $Y$. The following are equivalent:

(1) $F$ is a minimal usco map;

(2) There exist a quasicontinuous and subcontinuous selection $f$ of $F$ such that $\overline f = F$;

(3) Every selection $f$ of $F$ is quasicontinuous, subcontinuous and $\overline f = F$.
\end{theorem}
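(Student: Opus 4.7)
The plan is to prove the cycle $(2)\Rightarrow(1)\Rightarrow(3)\Rightarrow(2)$; the last step is immediate by picking any selection of the nonempty-valued $F$. For $(2)\Rightarrow(1)$, I first verify that $F=\overline{f}$ is usco. Nonemptiness of $F(x)$ is clear since $f(x)\in\overline{f}(x)$. Compactness of $F(x)$ rests on the identification $\overline{f}(x)=\bigcap_{U\in\mathcal{U}(x)}\overline{f(U)}$ combined with subcontinuity of $f$, which permits extracting convergent subnets from any net in $F(x)$ via a diagonal construction. For upper semicontinuity at $x_0$ with $F(x_0)\subseteq V$ open, use regularity to insert $V_1$ with $F(x_0)\subseteq V_1\subseteq\overline{V_1}\subseteq V$; a failure of USC yields $x_\alpha\to x_0$ with $y_\alpha\in F(x_\alpha)\setminus V$, and then $z_\alpha\to x_0$ with $f(z_\alpha)\in Y\setminus\overline{V_1}$ (since $y_\alpha\in\overline{f}(x_\alpha)$ and $Y\setminus\overline{V_1}$ is open). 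Subcontinuity produces a subnet $f(z_{\alpha_\beta})\to y^*$ with $y^*\in F(x_0)\subseteq V_1$ and simultaneously $y^*\in\overline{Y\setminus\overline{V_1}}\subseteq Y\setminus V_1$, a contradiction. For minimality, suppose $G\subseteq F$ is usco; it suffices to show $f(x)\in G(x)$ for every $x$, as then $f\subseteq G$ and, by closedness of the graph of $G$, $F=\overline{f}\subseteq G$. If $f(x_0)\notin G(x_0)$, the $T_3$ hypothesis separates the compact $G(x_0)$ and $\{f(x_0)\}$ by disjoint open sets $W_1\supseteq G(x_0)$ and $W_2\ni f(x_0)$. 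USC of $G$ yields open $U\ni x_0$ with $G(U)\subseteq W_1$, and quasicontinuity of $f$ at $x_0$ yields a nonempty open $H\subseteq U$ with $f(H)\subseteq W_2$. Since $G(x)$ is nonempty and contained in $F(x)\cap W_1=\overline{f}(x)\cap W_1$ for every $x\in H$, the definition of closure furnishes $x'\in H$ with $f(x')\in W_1$, contradicting $W_1\cap W_2=\emptyset$.

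For $(1)\Rightarrow(3)$, fix any selection $f$ of $F$. Subcontinuity is the well-known fact that selections of usco maps are subcontinuous. The identity $\overline{f}=F$ follows from $\overline{f}\subseteq F$ (the usco $F$ has closed graph in a Hausdorff range), the observation that $x\mapsto\overline{f}(x)$ is itself usco---the compactness and USC arguments above require only subcontinuity of $f$---and minimality of $F$. For quasicontinuity, argue contrapositively: a failure at $x_0$ yields open $V_0\ni f(x_0)$ and open $U_0\ni x_0$ such that $B=\{x\in U_0:f(x)\notin V_0\}$ is dense in $U_0$. Choose $V_1$ with $f(x_0)\in V_1\subseteq\overline{V_1}\subseteq V_0$, and define $G(x)=F(x)\setminus V_1$ for $x\in U_0$ and $G(x)=F(x)$ otherwise. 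Nonemptiness of $G(x)$ on $U_0$ uses density of $B$ and subcontinuity of $f$ to produce a cluster point of $f$ in $F(x)\cap(Y\setminus V_0)\subseteq F(x)\setminus V_1$; closed values are immediate, and USC of $G$ is routine, since at points of $U_0$ one intersects a USC-neighborhood provided by $F$ with the open set $U_0$, and outside $U_0$ one uses USC of $F$ directly. Since $f(x_0)\in F(x_0)\setminus G(x_0)$, this $G$ is a proper usco submap of $F$, contradicting minimality.

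The main obstacle is the minimality step in $(2)\Rightarrow(1)$: the naive attempt to derive a contradiction by exhibiting a point of $F\setminus G$ is consistent with $G\subsetneq F$ and leads nowhere. The correct move is to use nonemptiness of $G(x)$ at every point of the open set $H$ (not just at $x_0$), together with $G(x)\subseteq\overline{f}(x)$, to force $f$ to enter $W_1$ somewhere in $H$, in conflict with the quasicontinuity confinement $f(H)\subseteq W_2$. The USC verification of the strictly smaller usco submap in the quasicontinuity part of $(1)\Rightarrow(3)$ is similarly delicate across $\partial U_0$, but openness of $U_0$ permits the obvious local fix at interior points.
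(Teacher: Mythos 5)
The paper itself gives no proof of this statement (it is imported verbatim as Theorem 2.5 of [HH1]), so your argument has to be judged on its own. Its overall structure is the standard one and almost all of it is correct as written: the cycle $(2)\Rightarrow(1)\Rightarrow(3)\Rightarrow(2)$; the separation argument for minimality in $(2)\Rightarrow(1)$ (using that $G(x)\subset\overline f(x)\cap W_1$ for \emph{every} $x$ in the quasicontinuity window $H$, which is indeed the right move); the closed-neighborhood trick for upper semicontinuity of $\overline f$; and, for quasicontinuity in $(1)\Rightarrow(3)$, the proper usco submap $G(x)=F(x)\setminus V_1$ on $U_0$, whose upper semicontinuity at points of $U_0$ via $F(N_1)\subset W\cup V_1$, $N=N_1\cap U_0$, and outside $U_0$ via $G\subset F$, checks out. (Two small remarks: $(3)\Rightarrow(2)$ tacitly needs $F$ to have nonempty values so that a selection exists at all — this is a convention of the statement rather than a defect of your argument — and subcontinuity of selections of uscos is indeed standard, as the paper itself records.)

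The one genuine gap is the compactness of $\overline f(x)$ in $(2)\Rightarrow(1)$, which you dispatch in a single clause (``subcontinuity \dots permits extracting convergent subnets from any net in $F(x)$ via a diagonal construction''). This is precisely the nontrivial lemma underlying the theorem (for regular $Y$, subcontinuity of $f$ forces $\overline f$ to be usco; cf.\ [HH1], [HN], and the compactoid-filter results of [LL]), and the naive diagonal does not work: if for each $y_\alpha\in\overline f(x)$ you choose $z\in U$ with $f(z)\in V$ for some neighborhood $V$ of $y_\alpha$, a cluster point of the resulting net of values need not be a cluster point of $(y_\alpha)$, since at the indices witnessing the clustering the chosen $V$ may be arbitrarily large. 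The construction can be completed, but only by invoking regularity at exactly this spot: write $y_\alpha=\lim_i f(z^\alpha_i)$ with $z^\alpha_i\to x$, form the Kelley diagonal net $f\bigl(z^\alpha_{g(\alpha)}\bigr)$ indexed by $A\times\prod_\alpha I_\alpha$ (this diagonal net of arguments does converge to $x$), let $y^*$ be a cluster point of its values supplied by subcontinuity, and if $y^*$ failed to be a cluster point of $(y_\alpha)$, pick open $W_1\ni y^*$ with $\overline{W_1}\subset W_0$, where $y_\alpha\notin W_0$ for $\alpha\geq\alpha_0$; then for each such $\alpha$ the net $f(z^\alpha_i)$ is eventually outside $\overline{W_1}$, and a suitable threshold function $g_1$ makes the diagonal net eventually avoid $W_1$, contradicting clustering at $y^*$; finally, $\overline f(x)$ being closed puts $y^*$ back into $\overline f(x)$. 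Since regularity of $Y$ is what drives this step (your sketch never mentions where it enters the compactness proof, although you use it carefully everywhere else), you should either supply this argument or cite the lemma explicitly; as written, the clause asserts the hardest part of the theorem without proof.
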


\bigskip

Let $Y$ be a linear topological space and $B \subset Y$ is a set. By $\overline{co} B$ we denote the closed convex hull of the set $B$ (see [AB]).

\bigskip

The following Lemma is a folklore.

\begin{lemma} Let $X$ be a topological space and $Y$ be a Hausdorff locally convex linear topological space. Let $G$ be a usco map from $X$ to $Y$ and  $\overline{co}$$ G(x)$ is compact for every $x \in X$. Then the map $F$ defined as $F(x) = \overline{co}$$ G(x)$ for every $x \in X$ is a cusco map.
\end{lemma}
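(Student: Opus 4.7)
The plan is to verify the three defining properties of a cusco map: nonempty compact convex values, and upper semicontinuity. Convexity of $F(x)=\overline{co}\,G(x)$ is immediate from the definition of closed convex hull; compactness is given as a hypothesis; and $F(x)$ is nonempty because $G(x)\ne\emptyset$. So the entire content of the lemma reduces to checking that $F$ is upper semicontinuous.

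For upper semicontinuity, fix $x\in X$ and let $V\subset Y$ be open with $F(x)\subset V$. I want to produce $U\in\mathcal U(x)$ such that $F(u)\subset V$ for all $u\in U$. The strategy is to slip a convex open set $W$ between the compact set $F(x)$ and the open set $V$ in such a way that $W$ is strictly contained in $V$, more precisely with $\overline W\subset V$. Then since $G(x)\subset F(x)\subset W$, upper semicontinuity of $G$ yields $U\in\mathcal U(x)$ with $G(U)\subset W$. Convexity of $W$ gives $co\,G(u)\subset W$ for every $u\in U$, and taking closures yields
\[
F(u)=\overline{co}\,G(u)\subset\overline W\subset V,
\]
which is exactly what we need.

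The only real step is the construction of $W$. Here I would use that $Y$ is Hausdorff locally convex. A standard compactness argument shows that for any compact $K\subset Y$ and any open neighbourhood $V$ of $K$ one can find an open neighbourhood $N$ of $0$ with $K+N\subset V$: for each $k\in K$ pick an open symmetric $0$-neighbourhood $N_k$ with $k+N_k+N_k\subset V$, cover $K$ by finitely many $k_i+N_{k_i}$, and take $N=\bigcap_i N_{k_i}$. Applied to our compact convex $K=F(x)$ inside $V$, this produces an open $0$-neighbourhood $N_1$ with $F(x)+N_1\subset V$. Now using local convexity, shrink to a \emph{convex} open $0$-neighbourhood $N_2$ with $N_2+N_2\subset N_1$, and set $W:=F(x)+N_2$. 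Then $W$ is open (sum with an open set), convex (sum of two convex sets), contains $F(x)$, and satisfies
\[
\overline W\subset W+N_2\subset F(x)+N_2+N_2\subset F(x)+N_1\subset V,
\]
using the general fact that $\overline A\subset A+N$ for every $0$-neighbourhood $N$. This is the $W$ needed above.

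The main obstacle is essentially just this construction of a convex open ``sandwich'' $F(x)\subset W\subset\overline W\subset V$; once it is in place, the rest of the argument is routine. The point where the hypotheses are used is exactly here: local convexity gives convex $0$-neighbourhoods, and the compactness of $F(x)$ (not merely of $G(x)$) is what allows the finite subcover argument to go through at the level of the closed convex hull.
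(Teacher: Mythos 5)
Your proof is correct. The paper itself gives no argument for this lemma (it is stated as folklore), so there is nothing to compare against; your argument --- checking nonempty compact convex values directly and reducing upper semicontinuity of $F$ to that of $G$ by sandwiching a convex open set $W$ with $F(x)\subset W\subset\overline W\subset V$, built from the compactness of $\overline{co}\,G(x)$ and a convex $0$-neighbourhood $N_2$ with $N_2+N_2\subset N_1$ --- is precisely the standard proof one would supply, and every step (including $\overline W\subset W+N_2$ and $co\,G(u)\subset W$ by convexity of $W$) is justified. The only place worth a word of care is the hypothesis usage you already flag: it is the compactness of the hull $F(x)$, not just of $G(x)$, that makes the finite-cover construction of $N_1$ legitimate, and indeed the lemma's compactness assumption is exactly what your argument consumes.
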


\bigskip

\begin{remark} There are three important cases when the closed convex hull of a compact set is compact. The first is when the compact set is a finite union of compact convex sets. The second is when the space is completely metrizable and locally convex. This includes the case of all Banach spaces with their norm topologies. The third case is a compact set in the weak topology on a Banach space. (see[AB])
\end{remark}

Let $B$ be a subset of a linear topological space. By $\mathcal E(B)$ we denote the set of all extreme points of $B$.
Let $X$ be a topological space and $Y$ be a Hausdorff locally convex (linear topological) space. Let $F: X \to Y$ be a map with nonempty compact values. Then a selection $f$ of $F$ such that $f(x) \in \mathcal E(F(x))$ for every $x \in X$ is called an extreme function of $F$.

\bigskip

Notice that all known characterizations of minimal cusco maps are given in the class of cusco maps (see [GM], [BZ1]). So the following characterization of minimal cusco maps in the class of all set-valued maps can be of some interest:

\begin{theorem} (see [HH3]) Let $X$ be a topological space and $Y$ be a Hausdorff locally convex (linear topological) space. Let $F$ be a map from $X$ to $Y$. Then the following are equivalent:

(1) $F$ is a minimal cusco map;

(2) $F$ has nonempty compact values and there is a quasicontinuous, subcontinuous selection $f$ of $F$ such that $\overline {co}$$ \overline f(x) = F(x)$ for every $x \in X$;

(3) $F$ has nonempty compact, convex values, $F$ has a closed graph, every extreme function of $F$ is quasicontinuous, subcontinuous and any two extreme functions of $F$ have the same closures of their graphs;

(4) $F$ has nonempty compact values, every extreme function $f$ of $F$ is quasicontinuous, subcontinuous   and $F(x) = \overline{co}$$ \overline f(x)$ for every $x \in X$.

\end{theorem}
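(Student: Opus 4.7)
The plan is to establish the equivalences via $(1)\Rightarrow(2)\Rightarrow(3)\Rightarrow(4)\Rightarrow(2)$ together with $(2)\Rightarrow(1)$, using Theorem~2.1 (characterization of minimal usco maps by quasicontinuous, subcontinuous selections), Lemma~2.1 (closed convex hulls of uscos are cuscos when compact), and the Krein--Milman and Milman theorems for extreme-point arguments.

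$(1)\Rightarrow(2)$ is short: a minimal cusco $F$ is in particular usco, so it contains a minimal usco $G$. Theorem~2.1 yields a quasicontinuous, subcontinuous selection $f$ of $G$ with $\overline{f}=G$, and Lemma~2.1 (applicable because $\overline{co}\,G(x)\subseteq F(x)$ is compact) makes $\overline{co}\,G$ a cusco contained in $F$. Minimality of $F$ as a cusco forces $\overline{co}\,\overline{f}=F$.

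For the cycle $(2)\Rightarrow(3)\Rightarrow(4)\Rightarrow(2)$, the bridge is Milman's theorem. Under $(2)$, $\overline{f}$ is a minimal usco (Theorem~2.1) and $F=\overline{co}\,\overline{f}$ is a cusco (Lemma~2.1); Milman gives $\mathcal{E}(F(x))\subseteq\overline{f}(x)$, so every extreme function of $F$ is a selection of $\overline{f}$ and is therefore quasicontinuous, subcontinuous with closure $\overline{f}$ by Theorem~2.1(3)---this yields $(3)$. For $(3)\Rightarrow(4)$, write $\overline{g}$ for the common closure of all extreme functions of $F$; for each $y\in\mathcal{E}(F(x))$ one produces an extreme function of $F$ sending $x$ to $y$ (by the axiom of choice), so $\mathcal{E}(F(x))\subseteq\overline{g}(x)$, and Krein--Milman together with the closed graph of $F$ yield $F(x)=\overline{co}\,\mathcal{E}(F(x))\subseteq\overline{co}\,\overline{g}(x)\subseteq F(x)$. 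The passage $(4)\Rightarrow(2)$ is immediate, since Krein--Milman provides an extreme function of $F$ that meets all of $(2)$'s requirements.

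The main obstacle is $(2)\Rightarrow(1)$. Here $\overline{f}$ is a minimal usco and $F=\overline{co}\,\overline{f}$ is a cusco, and I must show $F$ is minimal as a cusco. For any cusco $F'\subseteq F$, the goal is to establish the inclusion $\overline{f}\subseteq F'$, from which $F=\overline{co}\,\overline{f}\subseteq F'$ closes the argument. The strategy is to analyse the intersection $\overline{f}\cap F'$: a Hausdorff separation argument shows that, provided the intersection is nonempty-valued, it is a usco (given an open $V\supseteq\overline{f}(x_0)\cap F'(x_0)$, the disjoint compact sets $\overline{f}(x_0)\setminus V$ and $F'(x_0)\setminus V$ can be covered by disjoint open sets $O_A,O_B$, so $V_A=V\cup O_A$ and $V_B=V\cup O_B$ satisfy $V_A\cap V_B=V$, and the upper semicontinuity of $\overline{f}$ and $F'$ produces a common neighbourhood of $x_0$). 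Minimality of $\overline{f}$ as a usco then forces $\overline{f}\cap F'=\overline{f}$. The most delicate step---where the bulk of the technical work will lie---is verifying that $\overline{f}(x)\cap F'(x)\neq\emptyset$ for every $x$; the plan is to invoke Hahn--Banach separation of a hypothetical $y\in\overline{f}(x)\setminus F'(x)$ from the compact convex $F'(x)$, combined with the quasicontinuity of $f$ and the identity $F(x)=\overline{co}\,\overline{f}(x)$, to produce the required contradiction.
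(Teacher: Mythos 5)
Your proposal is essentially correct, but note that this paper does not actually prove Theorem 2.2 at all: it is quoted from [HH3] without proof, and the only ingredient of it that the paper uses and re-derives elsewhere is handled by citing Proposition 2.7 of [BZ1] (namely, that $\overline{co}$ of a minimal usco is a minimal cusco whenever it is a cusco). So the natural comparison is with that machinery. Your cycle $(1)\Rightarrow(2)\Rightarrow(3)\Rightarrow(4)\Rightarrow(2)$ plus $(2)\Rightarrow(1)$ is sound: $(1)\Rightarrow(2)$ via a minimal usco inside $F$, Theorem 2.1 and Lemma 2.1; $(2)\Rightarrow(3)$ via Milman's theorem (which is surely also the route of [HH3], given [Mi] in the bibliography) so that extreme functions are selections of the minimal usco $\overline f$ and Theorem 2.1(3) applies; $(3)\Rightarrow(4)$ via Krein--Milman and the closed graph; and your $(2)\Rightarrow(1)$ is exactly a self-contained replacement for [BZ1, Prop.\ 2.7], which buys independence from that reference at the cost of redoing the separation argument. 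Two refinements: first, in your $(2)\Rightarrow(1)$ the detour through the intersection map $\overline f\cap F'$ is unnecessary --- the very separation argument you sketch (separate $y_0\in\overline f(x_0)\setminus F'(x_0)$ strictly from the compact convex $F'(x_0)$ by a continuous linear functional $\ell$ with threshold $\alpha$, shrink to a neighborhood $U$ of $x_0$ with $\ell(F'(z))<\alpha$ by upper semicontinuity, use that $(x_0,y_0)\in\overline f$ to find $z_1\in U$ with $\ell(f(z_1))>\alpha$, then quasicontinuity of $f$ to get a nonempty open $V\subseteq U$ with $f(V)\subseteq\{\ell>\alpha\}$, hence $\overline f(z)\subseteq\{\ell\ge\alpha\}$ and so $F'(z)\subseteq F(z)=\overline{co}\,\overline f(z)\subseteq\{\ell\ge\alpha\}$ for $z\in V$, contradicting $F'(z)\ne\emptyset$) already yields the stronger inclusion $\overline f(x)\subseteq F'(x)$ for every $x$, after which $F=\overline{co}\,\overline f\subseteq F'$ immediately; so the usco-intersection step and the appeal to minimality of $\overline f$ can be dropped. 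Second, in $(4)\Rightarrow(2)$ you need an extreme function to exist while $F(x)$ is only assumed compact, not convex, so Krein--Milman does not literally apply; you need the (standard, Zorn-type) fact that every nonempty compact subset of a Hausdorff locally convex space has an extreme point --- this should be stated or proved, since $\overline{co}\,F(x)$ need not be compact in a general locally convex space.
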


\bigskip

Let $X$ be a topological space and $Y$ be a Hausdorff locally convex (linear topological) space. Denote by $MU(X,Y)$ the set of all minimal usco maps and by $MC(X,Y)$
the set of all minimal cusco maps from $X$ to $Y$. Of course
$MU(X,Y)\cap MC(X,Y)\neq\emptyset$.  It follows from the next example that
$MU(X,Y)\setminus MC(X,Y)\neq\emptyset$ and also $MC(X,Y)\setminus
MU(X,Y)\neq\emptyset$.

\bigskip

\begin{example}Let $X = [-1,1]$ with the usual Euclidean
topology. Consider the maps $F$ and $G$ from $X$ to $\Bbb R$ defined
by:

\bigskip

\begin{center}
$F(x)=\left\{
  \begin{array}{ll}
    1, & \hbox{$x \in [-1,0)$;} \\
    \{-1,1\}, & \hbox{$x=0$;} \\
    -1, & \hbox{$x\in (0,1]$.}
  \end{array}
\right.$

\bigskip

$G(x)=\left\{
  \begin{array}{ll}
    1, & \hbox{$x \in [-1,0)$;} \\
   \hbox{$[-1,1]$}, & \hbox{$x=0$;} \\
    -1, & \hbox{$x\in (0,1]$.}
  \end{array}
\right.$
\end{center}

\end{example}

\bigskip

\begin{definition}Let $X$ be a topological space and $Y$ be a
Hausdorff locally convex (linear topological) space. We say that $f$
is *-quasicontinuous at $x$ if for
every $y\in\overline{co}$$\overline f(x)$,
for every $V \in \mathcal U(y)$ and every $U \in \mathcal U(x)$ there is a
nonempty open set $W \subset U$ such that $f(W) \subset V$. If $f$
is *-quasicontinuous at every point of $X$, we say that $f$ is
*-quasicontinuous.
\end{definition}

\bigskip

\begin{example} Consider the function $f$ from $\Bbb R$ to $\Bbb
R$ defined by:

\bigskip

\begin{center}
$f(x)=\left\{
  \begin{array}{ll}
    \sin\frac {1}{x}, & \hbox{$x\neq 0$;} \\
    0 & \hbox{$x=0$.}
  \end{array}
\right.$
\end{center}

The function $f$ is not continuous at $x=0$, but it is *-quasicontinuous
at $0$.
\end{example}

\begin{theorem}Let $X$ be a topological space and $Y$ be a
Hausdorff locally convex (linear topological) space. Let $f$ be a function from $X$ to $Y$. Then
$f$ is
 *-quasicontinuous  at $x$ if and only if every selection of
 $\overline f$ is quasicontinuous at $x$ and $\overline
{f}(x)=\overline{co}$$\overline f(x)$.
\end{theorem}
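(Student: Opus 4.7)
The plan is to prove the two implications separately, using throughout the identification of $f$ with its graph.

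For the forward direction, assume $f$ is *-quasicontinuous at $x$. I would first establish $\overline{co}\,\overline f(x) = \overline f(x)$. The nontrivial inclusion $\overline{co}\,\overline f(x) \subset \overline f(x)$ falls right out of the definition: for any $y \in \overline{co}\,\overline f(x)$ and any basic neighborhood $U \times V$ of $(x,y)$, *-quasicontinuity yields a nonempty open $W \subset U$ with $f(W) \subset V$, and any $w \in W$ supplies a graph point $(w, f(w)) \in U \times V$, so $(x,y) \in \overline f$. Next, for a selection $g$ of $\overline f$ and a neighborhood $V$ of $g(x)$, I would use local convexity of $Y$ to pick $V_1 \in \mathcal U(g(x))$ with $\overline{V_1} \subset V$, then apply *-quasicontinuity (valid since $g(x) \in \overline f(x) \subset \overline{co}\,\overline f(x)$) to obtain a nonempty open $W \subset U$ with $f(W) \subset V_1$. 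The upgrade $g(W) \subset V$ would then follow by contradiction: if some $w \in W$ had $g(w) \in Y \setminus V$, then $W \times (Y \setminus \overline{V_1})$ would be an open neighborhood of $(w, g(w))$ disjoint from the graph of $f$ (since $f(w') \in V_1 \subset \overline{V_1}$ for every $w' \in W$), contradicting $(w, g(w)) \in \overline f$.

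For the converse, assume both hypotheses. Given $y \in \overline{co}\,\overline f(x) = \overline f(x)$, $V \in \mathcal U(y)$, and $U \in \mathcal U(x)$, I would build the selection $g$ of $\overline f$ that agrees with $f$ everywhere except at $x$, where $g(x) := y$. This is well-defined as a selection because $f$ itself is a selection of $\overline f$ and $y \in \overline f(x)$ by hypothesis. Quasicontinuity of $g$ at $x$ furnishes a nonempty open $W \subset U$ with $g(W) \subset V$. If $x \notin W$, then $f|_W = g|_W$ and we are done. If $x \in W$, I would replace $W$ by $W \setminus \{x\}$, which is open because $X$ is $T_1$; it is nonempty unless $W = \{x\}$, in which case $x$ is isolated, so $\overline f(x) = \{f(x)\}$ forces $y = f(x) \in V$ and $W = \{x\}$ already satisfies $f(W) \subset V$.

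The main obstacle will be the upgrade from $f(W) \subset V_1$ to $g(W) \subset V$ in the forward direction: this is where local convexity of $Y$ (to make the shrinking $\overline{V_1} \subset V$ possible) and the defining closure property of $\overline f$ must work together. The converse is essentially a routine selection construction, with the only delicate point being the surgical removal of $x$ from $W$ and the side check for isolated points.
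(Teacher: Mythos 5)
Your argument is sound in both directions; note that the paper states this theorem without a proof, so there is no in-paper argument to compare against, and only correctness can be judged. The forward direction works exactly as you set it up: the closure argument giving $\overline{co}\,\overline f(x)\subset\overline f(x)$ is right, and the upgrade from $f(W)\subset V_1$ to $g(W)\subset V$ via the open set $W\times(Y\setminus\overline{V_1})$, which misses the graph of $f$ while containing $(w,g(w))\in\overline f$, is the correct mechanism; the only cosmetic point is that the existence of $V_1$ with $\overline{V_1}\subset V$ comes from regularity of a (Hausdorff) topological vector space, not from local convexity as such. In the converse, the one step to flag is the removal of $x$ from $W$, which you justify by ``$X$ is $T_1$'': the theorem as stated says only ``$X$ a topological space'', although the standing convention at the start of Section 2 (all spaces Hausdorff) does cover you, so within the paper's framework this is legitimate. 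If one insists on a completely arbitrary $X$, the $T_1$ use can be eliminated with a small twist: when $y\neq f(x)$, use Hausdorffness and regularity of $Y$ to choose an open $V_1\ni y$ with $\overline{V_1}\subset V$ and $f(x)\notin\overline{V_1}$, and get a nonempty open $W\subset U$ with $g(W)\subset V_1$; if $x\in W$, then quasicontinuity of $f$ (itself a selection of $\overline f$) applied to the neighborhood $Y\setminus\overline{V_1}$ of $f(x)$ and to $W$ yields a nonempty open $G\subset W$ with $f(G)\subset Y\setminus\overline{V_1}$, and since $f=g$ on $W\setminus\{x\}$ maps into $V_1$, this forces $G=\{x\}$ to be open, whence $\overline f(x)=\{f(x)\}$ and $y=f(x)$, a contradiction; so $x\notin W$ and $f(W)=g(W)\subset V$, while the case $y=f(x)$ follows directly from quasicontinuity of $f$. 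With that caveat noted, your proof is complete and your treatment of the isolated-point case is also correct.
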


It follows from the previous Theorem  that  every *-quasicontinuous function is quasicontinuous.

\bigskip

\begin{theorem} Let $X$ be a topological space and $Y$ be a
Hausdorff locally convex (linear topological) space. Let $F$ be a map
from $X$ to $Y$. Then the following are equivalent:

(1) $F\in MU(X,Y)\cap MC(X,Y)$;

(2) There exist a *-quasicontinuous and subcontinuous function $f$ from $X$ to $Y$
such that $\overline {f}= F$;

(3) Every selection $f$ of $F$ is
*-quasicontinuous, subcontinuous and $\overline {f}= F$.
\end{theorem}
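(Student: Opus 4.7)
The plan is to derive all three implications by repeatedly invoking the existing characterizations: Theorem 2.1 for minimal usco, Theorem 2.3 for minimal cusco, and Theorem 2.8 translating $*$-quasicontinuity into the conjunction of pointwise convexity ($\overline f(x)=\overline{co}\,\overline f(x)$) and quasicontinuity of every selection of $\overline f$. The whole point is that combining a minimal usco with convex values should be exactly what $*$-quasicontinuity encodes.

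For (1)$\Rightarrow$(3), I would start from an arbitrary selection $f$ of $F$. Since $F$ is minimal usco, Theorem 2.1(3) gives that $f$ is quasicontinuous, subcontinuous, and $\overline f=F$. Because $F$ is also (minimal) cusco, each $F(x)$ is closed and convex, so
\[
\overline f(x)=F(x)=\overline{co}\,F(x)=\overline{co}\,\overline f(x).
\]
In addition, every selection of $\overline f=F$ is quasicontinuous, again by Theorem 2.1(3) applied to the minimal usco $F$. These are precisely the two conditions in Theorem 2.8, so $f$ is $*$-quasicontinuous. Subcontinuity and $\overline f=F$ were already noted, giving (3).

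The implication (3)$\Rightarrow$(2) is immediate: minimal uscos and cuscos have nonempty values, so $F$ admits a selection, and that selection witnesses (2). For (2)$\Rightarrow$(1), let $f$ be a $*$-quasicontinuous and subcontinuous function with $\overline f=F$. Since $(x,f(x))\in f\subset\overline f$, the function $f$ is automatically a selection of $F$. By the remark following Theorem 2.8, $f$ is quasicontinuous, so by Theorem 2.1(2) applied with this selection, $F=\overline f$ is minimal usco; in particular $F$ has nonempty compact values. Theorem 2.8 also yields $F(x)=\overline f(x)=\overline{co}\,\overline f(x)$ for every $x$, so $f$ is a quasicontinuous, subcontinuous selection of a compact-valued $F$ with $\overline{co}\,\overline f(x)=F(x)$; Theorem 2.3(2) then gives that $F$ is minimal cusco, finishing (1).

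The argument is really a bookkeeping exercise; the only step that requires a small check is the two-directional use of Theorem 2.8 in (1)$\Rightarrow$(3), where one must verify both the convex-hull identity at each point (obtained from the convexity of $F(x)$ inherited from the cusco property) and the quasicontinuity of \emph{every} selection of $\overline f$ (obtained from minimal uscoity of $F=\overline f$ via Theorem 2.1(3)). I do not anticipate a genuine obstacle, since once one has simultaneous convexity and minimal-usco selection structure the characterizations line up essentially automatically.
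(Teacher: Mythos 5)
Your proof is correct and follows essentially the same route as the paper: it cycles (1)$\Rightarrow$(3)$\Rightarrow$(2)$\Rightarrow$(1) using the selection characterization of minimal usco maps, the convexity of the values of a cusco map, and the characterization of $*$-quasicontinuity (your ``Theorem 2.8'' is the paper's Theorem 2.3, and your ``Theorem 2.3'' is the paper's Theorem 2.2). The only divergence is the final step of (2)$\Rightarrow$(1): the paper concludes minimal cuscoity via Lemma 2.1 together with Proposition 2.7 of [BZ1], whereas you invoke condition (2) of the paper's minimal cusco characterization (Theorem 2.2), which is an equally valid and arguably more self-contained finish.
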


\begin{proof}$(1) \Rightarrow (3)$ Let $f$ be a selection of $F$.
Since $F\in MU(X,Y)$, by Theorem 2.1 $f$ is
quasicontinuous, subcontinuous and $\overline {f}= F$. Since $F\in MC(X,Y)$,  $\overline {co}$$ \overline f(x) = F(x)$. So by Theorem 2.3 $f$ is
*-quasicontinuous.

$(3) \Rightarrow (2)$ is trivial.

$(2) \Rightarrow (1)$ Let $f$ be a *-quasicontinuous and
subcontinuous function from $X$ to $Y$ such that $\overline
{f}= F$. Thus $f$ is quasicontinuous and by Theorem 2.3 $F(x)=\overline
{f}(x)=\overline{co}$$\overline f(x)$. By Theorem 2.1 $F$
is minimal usco map and since by Lemma 2.1 $F$ is cusco,  by Proposition 2.7 in [BZ1] $F$ is minimal cusco.
\end{proof}

\bigskip

Denote by $F(X,Y)$ the set of all maps with nonempty closed values from a topological space $X$ to a Hausdorff locally convex (linear topological) space $Y$. Define the function $\varphi :\ MU(X,Y)\to F(X,Y)$ as follows: $\varphi (F)(x)=\overline{co}$$ F(x)$.

\bigskip

The following theorem was proved in [HH3]. For a reader's convenience we will also  provide the proof.

\begin{theorem} (see Theorem 4.1 in [HH3]) Let $X$ be a topological space and $Y$ be a Hausdorff locally convex (linear topological) space. Let $F: X \to Y$ be a minimal cusco map. There is a unique minimal usco map contained in $F$.
\end{theorem}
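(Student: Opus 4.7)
The plan is to exhibit the minimal usco map explicitly as the graph-closure of any extreme function of $F$, and then show that any minimal usco contained in $F$ must coincide with it by an application of Milman's theorem.

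For existence, I would observe that since each $F(x)$ is a nonempty compact convex subset of a Hausdorff locally convex space, the Krein--Milman theorem gives $\mathcal E(F(x))\ne\emptyset$, so the axiom of choice produces an extreme function $e$ of $F$. Because $F$ is a minimal cusco, Theorem 2.3 (implication $(1)\Rightarrow(3)$) says that $e$ is quasicontinuous and subcontinuous. Then Theorem 2.1 (implication $(2)\Rightarrow(1)$), applied with the candidate $\overline{e}$ in the role of $F$ and $e$ in the role of the selection, yields that $\overline e$ is a minimal usco. Finally, $\overline e\subset F$: the selection $e$ satisfies $e\subset F$ (as graphs), and $F$, being cusco into a Hausdorff (hence regular) locally convex space, has closed graph, so $\overline e\subset\overline F=F$.

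For uniqueness, let $G$ be any minimal usco map with $G\subset F$; I want to show $G=\overline e$ for the extreme function $e$ above. The key step is the identity $\overline{co}\,G(x)=F(x)$ for every $x\in X$. Set $H(x)=\overline{co}\,G(x)$; since $F(x)$ is compact and convex and contains $G(x)$, we have $H(x)\subset F(x)$, so each $H(x)$ is compact. Lemma 2.1 then gives that $H$ is a cusco map, and $H\subset F$, so minimality of $F$ as a cusco forces $H=F$. Now Milman's converse to Krein--Milman applies to the compact convex set $F(x)=\overline{co}\,G(x)$: its extreme points lie in $\overline{G(x)}=G(x)$, where the closure is redundant because $G$ is usco and hence has closed values. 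Thus $e(x)\in\mathcal E(F(x))\subset G(x)$ for every $x$, i.e.\ $e$ is a selection of $G$. By Theorem 2.1 applied to the minimal usco $G$ with selection $e$, we conclude $G=\overline e$, which is precisely the map constructed in the existence step.

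The main obstacle is the intermediate identity $\overline{co}\,G(x)=F(x)$: one must verify that the assignment $x\mapsto\overline{co}\,G(x)$ is not merely a set-valued map contained in $F$ but is in fact a cusco, so that the minimality of $F$ \emph{among cuscos} can be invoked. Lemma 2.1 is exactly tailored for this, but it is where the hypotheses on $Y$ (Hausdorff locally convex, so that compactness of $\overline{co}\,G(x)$ is available) come in crucially. Once this step is in place, Milman's theorem and Theorem 2.1 close the argument mechanically.
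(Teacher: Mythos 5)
Your argument is correct, but it takes a genuinely different route from the paper's. The paper proves uniqueness by contradiction: given two minimal usco maps $G,H\subset F$, it separates $G(x)$ and $H(x)$ by a seminorm, uses upper semicontinuity and the quasicontinuity of a selection $h$ of $H$ to produce a nonempty open set $V$ on which $F(z)=\overline{co}\,\overline h(z)$ is trapped in a small ball around $h(x)$, contradicting that a selection $g$ of $G$ also selects from $F$; then $L(x)=G(x)\cap H(x)$ is usco and minimality forces $G=L=H$. You instead identify the minimal usco explicitly: an extreme function $e$ of $F$ exists by Krein--Milman, is quasicontinuous and subcontinuous by Theorem 2.3, so $\overline e$ is minimal usco by Theorem 2.1 and sits inside $F$ (closed graph); and for any minimal usco $G\subset F$ you get $\overline{co}\,G(x)=F(x)$ from Lemma 2.1 plus minimality of $F$, whence Milman's converse gives $\mathcal E(F(x))\subset G(x)$, so $e$ selects from $G$ and Theorem 2.1 yields $G=\overline e$. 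Your version buys more: an explicit description of the unique minimal usco inside $F$ as the graph-closure of any extreme function (and an independent existence proof, though existence already follows from Kuratowski--Zorn as noted in Section 2). The cost is heavier machinery (Krein--Milman, Milman's theorem, and Theorem 2.3), and one caveat: Theorem 2.3 is quoted from [HH3], the very paper in which the present statement is Theorem 4.1, so if one were reconstructing [HH3] itself your argument would need a check that the extreme-function characterization is not proved there via this uniqueness result; within the present paper, where Theorem 2.3 is taken as known, your proof is sound. The paper's argument, by contrast, is essentially self-contained, needing only Theorem 2.1 and separation by seminorms (note that its step $F(z)=\overline{co}\,\overline h(z)$ is justified by the same Lemma 2.1 plus minimality device you isolate as your key identity).
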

\begin{proof}  Let $G, H$ be two minimal usco maps contained in $F$. It is sufficient to prove that $G(x) \cap H(x) \ne \emptyset$ for every $x \in X$.
Then a map $L: X \to Y$ defined as $L(x) = G(x) \cap H(x)$ for every $x \in X$ is usco and $L \subset G$, $L \subset H$. Thus $G = L = H$.

Let $\tau$ be a Hausdorff locally convex topology on $Y$ and $\Gamma$ be a system of seminorms on $X$ which generate $\tau$. For every $x \in X$, every $p \in \Gamma$ and $\epsilon > 0$ we denote

\bigskip
\centerline{$S_{p,\epsilon}(x) = \{y \in Y: p(x - y) < \epsilon\}$ and $S_{p,\epsilon}(A) = \cup_{a \in A} S_{p,\epsilon}(a)$.}
\bigskip

Suppose there is $x \in X$ such that $G(x) \cap H(x) = \emptyset$. Since $G(x), H(x)$ are compact sets, there is a seminorm $p$ and $\epsilon > 0$ such that
\bigskip

\centerline{$S_{p,\epsilon}(G(x)) \cap S_{p,\epsilon}(H(x)) = \emptyset.$}
\bigskip

The upper semicontinuity of $G$ and $H$ implies that there is $U \in \mathcal U(x)$ such that

\bigskip
\centerline{$G(z) \subset S_{p,\epsilon}(G(x))$ and $H(z) \subset S_{p,\epsilon}(H(x))$}
\bigskip
for every $z \in U$. Let $g \subset G$ and $h \subset H$ be selections of $G, H$ respectively. The quasicontinuity of $h$ at $x$ implies that there is a nonempty open set $V \subset U$ such that

\bigskip
\centerline{$h(V) \subset S_{p,\epsilon/2}(h(x)) \subset \overline{S_{p,\epsilon/2}(h(x))} \subset S_{p,\epsilon}(h(x))$.}

\bigskip
Thus $\overline h(V) \subset \overline{S_{p,\epsilon/2}(h(x))}$, i.e. $\overline{co}$$\overline h(z) \subset \overline{S_{p,\epsilon/2}(h(x))} \subset S_{p,\epsilon}(h(x)) $ for every $z \in V$. For every $z \in V$ we have $F(z) = \overline{co}$$\overline h(z) \subset S_{p,\epsilon}(h(x))$. Since $g(z) \in F(z)$, we have $g(z) \in S_{p,\epsilon}(h(x))$, a contradiction.
\end{proof}

\begin{theorem}Let $X$ be a topological space and $Y$ be a Hausdorff locally convex (linear topological) space in which the closed convex hull of a compact set is compact. The map $\varphi$ is bijection from $MU(X,Y)$ to $MC(X,Y)$.
\end{theorem}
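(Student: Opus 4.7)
The plan is to verify the three properties in turn: $\varphi$ is well-defined into $MC(X,Y)$, injective, and surjective. The key tools are the characterizations in Theorems 2.1 and 2.3, together with Theorem 2.5 (uniqueness of the minimal usco inside a minimal cusco) and Lemma 2.1. All the technical assembly has essentially been done in those results; here we just thread them together.

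First, to see that $\varphi$ lands in $MC(X,Y)$, let $F \in MU(X,Y)$. By the hypothesis on $Y$, $\overline{co}\, F(x)$ is compact for every $x$, so by Lemma 2.1 the map $\varphi(F)$ is cusco. To upgrade this to minimality, I would pick any selection $f$ of $F$; by Theorem 2.1 (3), $f$ is quasicontinuous, subcontinuous, and $\overline f = F$. Then $f$ is also a selection of $\varphi(F)$ (since $F(x)\subset\overline{co}\,F(x)$), and
\[
\overline{co}\,\overline f(x) \;=\; \overline{co}\, F(x) \;=\; \varphi(F)(x).
\]
Condition (2) of Theorem 2.3 is thus satisfied, so $\varphi(F) \in MC(X,Y)$.

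Injectivity is the easiest step and follows from Theorem 2.5. Suppose $F_1, F_2 \in MU(X,Y)$ and $\varphi(F_1) = \varphi(F_2) =: G$. Since $F_i(x) \subset \overline{co}\, F_i(x) = G(x)$, both $F_1$ and $F_2$ are minimal uscos contained in the minimal cusco $G$. Theorem 2.5 forces $F_1 = F_2$.

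For surjectivity, let $G \in MC(X,Y)$. By Theorem 2.3 (2) there is a quasicontinuous and subcontinuous selection $h$ of $G$ with $\overline{co}\,\overline h(x) = G(x)$. By Theorem 2.1 (applied in the direction (2) $\Rightarrow$ (1)), the map $\overline h$ is a minimal usco. Because $\overline h(x) \subset \overline{co}\,\overline h(x) = G(x)$, this minimal usco is contained in $G$, so Theorem 2.5 identifies it with the unique minimal usco $F$ sitting inside $G$. Then
\[
\varphi(F)(x) \;=\; \overline{co}\, F(x) \;=\; \overline{co}\,\overline h(x) \;=\; G(x),
\]
which exhibits $G$ as $\varphi(F)$.

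The only step that demands any care is the well-definedness: one has to remember that the same quasicontinuous, subcontinuous selection $f$ of $F$ is simultaneously the selection of $\varphi(F)$ required by Theorem 2.3, and that the compactness of $\overline{co}\,F(x)$ (which is exactly the standing hypothesis on $Y$) is what lets Lemma 2.1 produce a cusco from the usco $F$. Everything else is a bookkeeping application of the preceding theorems.
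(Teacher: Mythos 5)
Your proof is correct, and while it follows the same overall decomposition as the paper (well-definedness, injectivity via Theorem 2.5, surjectivity), the justifications for two of the three steps are genuinely different. For minimality of $\varphi(F)$ the paper applies Lemma 2.1 and then cites Proposition 2.7 of [BZ1] (a minimal usco inside a cusco yields a minimal cusco upon taking closed convex hulls); you instead feed the quasicontinuous, subcontinuous selection $f$ of $F$ with $\overline f = F$ into condition (2) of Theorem 2.3, which makes the argument self-contained within the paper's own characterizations (and renders the Lemma 2.1 step redundant, since Theorem 2.3 already delivers that $\varphi(F)$ is cusco). For surjectivity the paper takes a minimal usco $F$ inside $G$ (whose existence comes from the Kuratowski--Zorn argument mentioned in Section 2), observes via Lemma 2.1 and convexity of $G(x)$ that $\overline{co}\,F \subset G$ is cusco, and invokes minimality of $G$ to conclude $G = \overline{co}\,F$; you instead extract a selection $h$ from Theorem 2.3(2), use Theorem 2.1 (2)$\Rightarrow$(1) to see that $\overline h$ is a minimal usco contained in $G$, and read off $\varphi(\overline h) = G$ directly, with no appeal to Zorn or to the minimality of $G$ at that point (your extra appeal to Theorem 2.5 there is harmless but unnecessary). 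The paper's route is shorter given the external reference; yours trades that reference for the selection machinery already developed in Theorems 2.1 and 2.3, which arguably makes the bijection more transparent: $\varphi$ and $\varphi^{-1}$ are both realized by passing through a common quasicontinuous, subcontinuous selection.
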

\begin{proof}Let $F\in MU(X,Y)$.  To show that $\varphi (F)\in MC(X,Y)$ note that by Lemma 2.1 the map $G$ defined as $G(x) = \overline{co}F(x)$ for every $x \in X$ is a cusco map and by Proposition 2.7 in [BZ1]  $G$ is minimal cusco.

Next we show that $\varphi$ maps $MU(X,Y)$ onto $MC(X,Y)$. Let $G\in MC(X,Y)$ and let $F$ be a minimal usco map contained in $G$. By Lemma 2.1 the map $x\rightarrow \overline{co}F(x)$ is a cusco map such that $\overline{co}F(x)\subset G(x)$ for every $x \in X$. Since $G$ is minimal cusco, $G(x) = \overline{co}F(x)$ for every $x \in X$.

Finally,  to show that $\varphi$ is one-to-one, suppose that $F,\
G\in MU(X,Y)$ and $F\neq G$. Suppose, by way of contradiction, that $\varphi (F)=\varphi (G)$. So by Theorem 2.5 $F=G$, a contradiction.

\end{proof}
\bigskip
\section{Topological properties of $\varphi$}

Let $X$ be a Hausdorff topological space and $(Y,d)$ be a metric space.
The open $d$-ball with center $z_0\in Y$ and radius $\varepsilon
>0$ will be denoted by $S_\varepsilon (z_0)$ and the
$\varepsilon $-paralel body $\bigcup_{a\in A}S_\varepsilon (a)$ for
a subset $A$ of $Y$ will be denoted by $S_\varepsilon
(A)$.\newline
Denote by $CL(Y)$ the space of all nonempty closed subsets of $Y$. By $\frak K (X)$ and $\frak F(X)$ we
mean the family of all nonempty compact and finite subsets of $X$, respectively.
If $A\in CL(Y)$, the distance functional $d(.,A)\ :Y
\mapsto[0,\infty )$ is described by the familiar formula
$$d(z,A)=\inf \{d(z,a):\ a\in A\}.$$

Let $A$ and $B$ be nonempty subsets of $(Y,d)$. The excess of $A$ over $B$ with respect to $d$ is defined by the formula
$$e_d(A,B)=\sup\{d(a,B):\ a\in A\}.$$

The Hausdorff (extended-valued) metric $H_d$ on $CL(Y)$ [Be] is
defined by

$$H_d(A,B)=max\{e_d(A,B), e_d(B,A)\}.$$

 We will often use the following
equality on $CL(Y)$:
$$H_d(A,B)=inf\{\varepsilon > 0:\ A\subset S_\varepsilon (B)\ \text{and}\ B\subset
S_\varepsilon (A)\}.$$
The  topology generated by  $H_d$ is called the Hausdorff metric topology.

\bigskip

Following [HM] we will define the topology $\tau_p$ of
pointwise convergence on $F(X,Y)$. The topology $\tau_p$ of pointwise
convergence on $F(X,Y)$ is induced by the uniformity $\frak U_p$ of
pointwise convergence which has a base consisting of sets of the
form

$$W(A,\varepsilon )=\{(\Phi ,\Psi):\ \forall\ x\in A\ \ H_d(\Phi (x),\Psi(x))<
\varepsilon \},$$

where $\noindent A\in\frak F(X)$ and $\varepsilon >0$. The general
$\tau_p$-basic neighborhood of $\Phi\in F(X,Y)$ will be denoted by
$W(\Phi,A,\varepsilon )$, i.e. $W(\Phi,A,\varepsilon
)=W(A,\varepsilon )[\Phi] = \{\Psi:H_d(\Phi(x),\Psi(x)) <
\varepsilon$ for every $x \in A\}$. If $A=\{a\}$, we will write
$W(\Phi,a,\varepsilon )$ instead of $W(\Phi ,\{a\},\varepsilon )$.

\bigskip

We will define the topology $\tau_{UC}$ of uniform convergence on
compact sets on $F(X,Y)$ [HM]. This topology is induced by the
uniformity $\frak U_{UC}$  which has a base consisting of sets of the
form

$$W(K,\varepsilon )=\{(\Phi ,\Psi):\ \forall\ x\in K\ \ H_d(\Phi (x),\Psi(x))<
\varepsilon \},$$

where  $\noindent K\in\frak K(X)$ and $\varepsilon >0$. The general
$\tau_{UC}$-basic neighborhood of $\Phi\in F(X,Y)$ will be denoted by
$W(\Phi,K,\varepsilon )$, i.e. $W(\Phi,K,\varepsilon
)=W(K,\varepsilon )[\Phi]$.

\bigskip

Finally we will define the topology $\tau_U$ of uniform
convergence on $F(X,Y)$  [HM].  Let $\varrho$ be the (extended-valued)
metric on $F(X,Y)$ defined by
$$\varrho(\Phi,\Psi )=\sup\{H_d(\Phi (x),\Psi (x)):\ x\in X\},$$ for each
$\Phi ,\Psi \in F(X,Y)$. Then the topology of uniform convergence for
the space $F(X,Y)$ is the topology generated by the metric $\varrho$.

\bigskip

\begin{lemma} Let $Y$ be a normed linear space. Let $A,B$ be nonempty closed subsets of $Y$. Then $H_d(\overline{co} A,\overline{co} B)\leq H_d(A,B)$.

\end{lemma}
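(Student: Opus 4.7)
The plan is to reduce the Hausdorff distance inequality to a corresponding inequality for the excess $e_d$ in each direction, and to handle each excess by exploiting the fact that, for a convex set $C \subset Y$, the distance functional $y \mapsto d(y, C)$ is both convex and $1$-Lipschitz.

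First I would recall that by the $\inf$-characterization of $H_d$ stated in the paper, it suffices to prove $e_d(\overline{co}\, A, \overline{co}\, B) \leq H_d(A,B)$ and, symmetrically, $e_d(\overline{co}\, B, \overline{co}\, A) \leq H_d(A,B)$. Fix $C := \overline{co}\, B$. The function $\psi(y) := d(y,C)$ is $1$-Lipschitz on $Y$, and because $C$ is convex, $\psi$ is a convex function on $Y$: indeed, for $y_1,y_2 \in Y$, $\lambda \in [0,1]$, and arbitrary $c_1,c_2 \in C$, convexity of $C$ gives $\lambda c_1 + (1-\lambda)c_2 \in C$, so $\psi(\lambda y_1 + (1-\lambda)y_2) \leq \|\lambda(y_1-c_1)+(1-\lambda)(y_2-c_2)\| \leq \lambda\|y_1-c_1\|+(1-\lambda)\|y_2-c_2\|$, and taking infima on the right yields $\psi(\lambda y_1 + (1-\lambda)y_2) \leq \lambda\psi(y_1)+(1-\lambda)\psi(y_2)$.

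Next, set $r := H_d(A,B)$ (if $r = \infty$ there is nothing to prove). For any $a \in A$ we have $\psi(a) = d(a,C) \leq d(a,B) \leq e_d(A,B) \leq r$. By convexity of $\psi$, the same bound $\psi(z) \leq r$ holds for every $z \in \mathrm{co}(A)$. By the $1$-Lipschitz property of $\psi$, this bound extends to every point of $\overline{\mathrm{co}(A)} = \overline{co}\, A$. Hence $e_d(\overline{co}\, A, \overline{co}\, B) \leq r$. Swapping the roles of $A$ and $B$ gives $e_d(\overline{co}\, B, \overline{co}\, A) \leq r$, and combining the two excess bounds yields $H_d(\overline{co}\, A, \overline{co}\, B) \leq r = H_d(A,B)$.

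There is no real obstacle here; the only subtle point is verifying the two properties of $\psi$ (convexity and $1$-Lipschitz continuity), and these are immediate from the definition of $d(\cdot,C)$ together with convexity of $C$. Note that this is precisely the place where the normed (hence, in particular, locally convex with a translation-invariant metric generating a convex ball) structure of $Y$ is used.
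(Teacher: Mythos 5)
Your proof is correct and follows essentially the same route as the paper: reduce the Hausdorff-distance inequality to the two excess estimates and exploit convexity of the second set, the key point being that taking the closed convex hull of the first argument does not increase the excess against a convex set. The only difference is that where the paper cites this last fact as a known result (Beer, Exercise 1.5.3(b)), you prove the needed inequality directly from the convexity and $1$-Lipschitz continuity of $y \mapsto d(y,\overline{co}\,B)$, which makes the argument self-contained but does not change its substance.
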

\begin{proof}
At first we show that $e_d(\overline{co} A,\overline{co} B)\leq e_d(A,B)$. It is known (see [Be] exercise 1.5.3. b), that if $C$ is convex then $e_d(\overline{co} A, C)= e_d(A, C)$. So $e_d(\overline{co} A,\overline{co} B)= e_d(A,\overline{co} B)$. Since $B\subset\overline{co} B$ we have that
$$e_d(\overline{co} A,\overline{co} B)= e_d(A,\overline{co} B)\leq e_d(A,B).$$
Similarly we can show that
$$e_d(\overline{co} B,\overline{co} A)= e_d(B,\overline{co} A)\leq e_d(B,A).$$
Since for every $C,D\in CL(Y)$
$$H_d(C,D)=max\{e_d(C,D), e_d(D,C)\},$$ we are done.

\end{proof}

\bigskip

\begin{theorem}Let $X$ be a topological space and $Y$ be a Banach space. The map $\varphi$ from $(MU(X,Y),\tau )$ onto $(MC(X,Y),\tau )$ is continuous if $\tau$ is one of the following topologies $\tau_p, \tau_{UC},
\tau_U$.
\end{theorem}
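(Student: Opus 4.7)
The plan is to show that $\varphi$ is pointwise nonexpansive with respect to the Hausdorff metric, which will immediately yield continuity in all three topologies simultaneously.

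First I would observe that for every $F \in MU(X,Y)$ and every $x \in X$, the value $F(x)$ is a nonempty compact subset of the Banach space $Y$, so by Remark 2.1 its closed convex hull $\overline{co}\, F(x)$ is compact, and in particular $\varphi(F)(x) \in CL(Y)$. Thus $\varphi(F), \varphi(G) \in F(X,Y)$ for $F, G \in MU(X,Y)$, and it makes sense to compare them in the topologies $\tau_p$, $\tau_{UC}$, $\tau_U$.

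The key step is a direct application of Lemma 3.1: for any $F, G \in MU(X,Y)$ and any $x \in X$,
$$H_d(\varphi(F)(x), \varphi(G)(x)) = H_d(\overline{co}\, F(x), \overline{co}\, G(x)) \leq H_d(F(x), G(x)).$$
This inequality is the whole engine of the proof.

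Given this, continuity in each of the three topologies is routine. For $\tau_p$, given a basic $\tau_p$-neighborhood $W(\varphi(F), A, \varepsilon)$ of $\varphi(F)$ with $A \in \mathfrak{F}(X)$ and $\varepsilon > 0$, I would claim that $\varphi$ sends $W(F, A, \varepsilon) \cap MU(X,Y)$ into $W(\varphi(F), A, \varepsilon)$: indeed, if $G \in W(F, A, \varepsilon)$ then $H_d(F(x), G(x)) < \varepsilon$ for every $x \in A$, and the nonexpansivity inequality above immediately gives $H_d(\varphi(F)(x), \varphi(G)(x)) < \varepsilon$ for every $x \in A$. The verification for $\tau_{UC}$ is literally the same argument with $K \in \mathfrak{K}(X)$ in place of $A$, and for $\tau_U$ one takes the supremum over all $x \in X$ to conclude $\varrho(\varphi(F), \varphi(G)) \leq \varrho(F, G)$, so $\varphi$ is in fact $\varrho$-nonexpansive, hence uniformly continuous.

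There is essentially no obstacle here: once Lemma 3.1 is in hand, the proof reduces to a mechanical unpacking of the definitions of $\tau_p$, $\tau_{UC}$, and $\tau_U$. The only thing worth checking carefully is that $\varphi$ indeed takes values in $F(X,Y)$ (i.e.\ that $\overline{co}\, F(x)$ is closed and nonempty), which follows from compactness of $F(x)$ together with Remark 2.1.
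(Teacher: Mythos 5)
Your proposal is correct and follows essentially the same route as the paper, whose proof of this theorem consists precisely of invoking Lemma 3.1 (the nonexpansivity of the closed convex hull with respect to $H_d$) and leaving the routine unpacking of the topologies $\tau_p$, $\tau_{UC}$, $\tau_U$ to the reader. You have simply written out that unpacking explicitly, which matches the intended argument.
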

\begin{proof}The proof follows from the
above Lemma.\end{proof}

\bigskip

The following example shows that the map $\varphi^{-1}$ from
$(MC([-1,1]),\tau_p )$ onto\newline $(MU([-1,1]),\tau_p )$ is not
continuous.

\begin{example}Let $X = [-1,1]$ with the usual Euclidean
topology. Let $F$ and $G$ are maps from Example 2.1. Then
$F=\varphi^{-1}(G)$. We claim that $\varphi^{-1}$ is not continuous
at $G$. For every $n\in\Bbb Z^+$ let $P_n$ be the
map from $[-1,1]$ to $\Bbb R$ defined by

\bigskip

\begin{center}

$P_n(x)=\left\{
  \begin{array}{ll}
    1, & \hbox{$x\in [-1,0)$;} \\
    \hbox{$[-1,1]$}, & \hbox{$x=0$;} \\
    \hbox{$\sin\frac {1}{x}$}, & \hbox{$x\in (0,\frac 2{(4n-1)\pi}]$;} \\
    -1, & \hbox{$x\in [\frac {2}{(4n-1)\pi},1]$.}
  \end{array}
\right.$

\end{center}

\bigskip

 It is easy to see that
for every $\noindent A\in\frak F(X)$ and every $\varepsilon >0$
there exists an $n_0\in\Bbb Z^+$ such that $P_n\in W(G,A,\varepsilon
)$ for every $n\geq n_0$. For every $n\in\Bbb Z^+$ we have that
$H_d(F(0),\varphi^{-1}(P_n)(0))=1$. Then for every $n\in\Bbb Z^+$
$P_n\notin W(F,0,\frac 12)$ and so the map $\varphi^{-1}$ is not
continuous at $G$.
\end{example}

\bigskip

\begin{remark}
If $X$ is a Baire space, $Y$ is a normed linear space  and $F\in MU(X,Y)$, then there is a dense $G_\delta$-subset $E$ of $X$ such that $F(x)$ is single-valued for each $x\in E$. In fact, let $f: X \to Y$ be a quasicontinuous subcontinuous selection of $F$ such that $\overline f = F$ (see Theorem 2.1). By Theorem 4.1 in [HP] the set $C(f)$ of the points of continuity of $f$ is a dense $G_\delta$-set in $X$. It is easy to verify that $F$ must be single-valued at every point of the set $C(f)$.
The same holds also for $F\in MC(X,Y)$.
\end{remark}

\bigskip

\begin{theorem}Let $X$ be a locally compact space and $Y$ be a Banach space. The map
$\varphi$ from $(MU(X,Y),\tau_{UC})$ onto $(MC(X,Y),\tau_{UC})$ is
homeomorphism.\end{theorem}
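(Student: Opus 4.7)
The plan is to use Theorem 3.1 for continuity of $\varphi$ in $\tau_{UC}$ and focus on continuity of $\varphi^{-1}:(MC(X,Y),\tau_{UC})\to(MU(X,Y),\tau_{UC})$. Fix $F\in MC(X,Y)$, set $G=\varphi^{-1}(F)\in MU(X,Y)$, and fix a compact set $K\subset X$ together with $\varepsilon>0$. The goal is to produce a basic $\tau_{UC}$-neighbourhood $W(F,L,\delta)$ of $F$ such that every $H\in W(F,L,\delta)\cap MC(X,Y)$ satisfies $\varphi^{-1}(H)\in W(G,K,\varepsilon)$. Local compactness is used to pad $K$: I would choose a compact $L$ with $K\subset\mathrm{Int}\,L$, and note that Example 3.1 already shows why this compact padding is indispensable---the $\tau_p$ version of the statement fails precisely because no compact neighbourhood is involved.

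I would then convert the upper semicontinuity of $F$ and $G$ into a finite cover adapted to $K$. For each $x\in K$, pick an open $U_x$ with $\overline{U_x}\subset\mathrm{Int}\,L$ such that $F(U_x)\subset S_{\varepsilon/4}(F(x))$ and $G(U_x)\subset S_{\varepsilon/4}(G(x))$; by compactness select a finite subcover $U_{x_1},\ldots,U_{x_m}$. Taking $\delta$ small (for definiteness $\delta=\varepsilon/4$), I would claim $W(F,L,\delta)$ is the required neighbourhood. For $H\in W(F,L,\delta)\cap MC(X,Y)$ and $G_H:=\varphi^{-1}(H)$, Theorem 2.1 gives quasicontinuous subcontinuous selections $g_H$ of $G_H$ with $\overline{g_H}=G_H$, and $g$ of $G$ with $\overline{g}=G$.

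The pointwise estimate $H_d(G_H(x),G(x))<\varepsilon$ for $x\in K$ would split into two excess bounds. For $y\in G_H(x)=\overline{g_H}(x)$, I would write $y=\lim_\beta g_H(u_\beta)$ with $u_\beta\to x$; eventually $u_\beta\in U_{x_i}$ (where $x\in U_{x_i}$), so $g_H(u_\beta)\in H(u_\beta)\subset S_\delta(F(u_\beta))\subset S_{\delta+\varepsilon/4}(F(x_i))$. The dual bound is obtained symmetrically: for $y'\in G(x)=\overline{g}(x)$, writing $y'=\lim_\beta g(v_\beta)$ and using $F(v_\beta)\subset S_\delta(H(v_\beta))$ on $L$ places $g(v_\beta)$ within $\delta+\varepsilon/4$ of $F(x_i)$ and hence near $H(v_\beta)$.

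The main obstacle---and the step I expect to be genuinely delicate---is the last mile: converting closeness of $y$ to $F(x_i)=\overline{co}\,G(x_i)$ into closeness of $y$ to $G(x_i)$ itself, since the convex hull can be much larger than $G(x_i)$. This is exactly the failure mode isolated by the maps $P_n$ in Example 3.1: at a single point the convex hulls can coincide while the min uscos stand apart. The way around is to exploit the minimality of both $G_H$ and $G$ together with the \emph{uniform} control of $H$ by $F$ on the whole compact neighbourhood $\overline{U_{x_i}}\subset L$: a quasicontinuous selection of a minimal usco cannot acquire cluster values at $x$ that fail to be approximable by cluster values of selections of $G$ without inducing oscillations of $H$ incompatible with the bound $H_d(H(u),F(u))<\delta$ holding uniformly for $u\in \overline{U_{x_i}}$. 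This is precisely the strengthening from $\tau_p$ to $\tau_{UC}$ that the hypothesis of local compactness makes available.
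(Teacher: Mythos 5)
Your setup (reduce to continuity of $\varphi^{-1}$ via Theorem 3.1, pad $K$ to a compact $L$ with $K\subset \mathrm{Int}\,L$ using local compactness, then take limits of selections) matches the paper's, but the step you yourself flag as ``genuinely delicate'' is exactly where your argument stops being a proof. The final paragraph --- that a selection of the minimal usco $\varphi^{-1}(H)$ cannot cluster at values far from $G(x)$ ``without inducing oscillations of $H$ incompatible with the bound'' --- is an assertion, not an argument; nothing in your sketch converts the estimate $d\bigl(y,\overline{co}\,G(x_i)\bigr)<\delta+\varepsilon/4$ into an estimate against $G(x_i)$ itself, and upper semicontinuity alone will never do this, since the hull can be strictly larger on a whole neighbourhood of the point you care about.

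The missing idea in the paper is Remark 3.1: a minimal usco $F$ into a normed space is single-valued on a dense set (the continuity points of a quasicontinuous subcontinuous selection), and local compactness guarantees such points inside $\mathrm{Int}\,K_1$ near any $x\in K$. The whole convex-hull difficulty evaporates there: if $F(x_U)$ is a singleton then $G(x_U)=\overline{co}\,F(x_U)=F(x_U)$, so the hypothesis $H_d\bigl(H(x_U),G(x_U)\bigr)<\varepsilon/3$ bounds every point of $\varphi^{-1}(H)(x_U)\subset H(x_U)$ against $F(x_U)$ directly. Quasicontinuity of a selection of $F$ lets one choose, for each neighbourhood $U$ of $x$, such a point $x_U\in U\cap \mathrm{Int}\,K_1$ with $F(x_U)\in S_{\varepsilon/3}(y)$ for the prescribed $y\in F(x)$; picking $y_U\in\varphi^{-1}(H)(x_U)$ gives $d(y,y_U)<2\varepsilon/3$, and a convergent subnet of $(x_U,y_U)$ (available because $\varphi^{-1}(H)$ is usco on the compact $K_1$, hence has closed graph and compact values there) produces $z\in\varphi^{-1}(H)(x)$ with $d(y,z)\le 2\varepsilon/3<\varepsilon$. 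The reverse inclusion is symmetric. So your proposal is not wrong in its framework, but it lacks the one lemma (density of single-valued points, where $F$ and $\varphi(F)$ coincide) that actually closes the gap, and the oscillation heuristic you offer in its place is not a viable substitute as stated.
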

\begin{proof} By Theorem 2.6  $\varphi$ is a bijection. By Theorem 3.1 it is sufficient to prove that
$\varphi^{-1}$ is continuous. Let $G\in MC(X,Y)$ and
$F=\varphi^{-1}(G)$. Let $K\in\frak K(X)$ and $\varepsilon
>0$. We show that there exist $K_1\in\frak K(X)$ and
$\varepsilon_1 >0$ such that $\varphi^{-1}(W(G,K_1,\varepsilon_1
))\subset W(F,K,\varepsilon )$. Let $K_1\in\frak K(X)$ be such that
$K\subset IntK_1$. Put $\varepsilon_1=\frac\varepsilon 3$. Let $H\in
W(G,K_1,\varepsilon_1 )$ and $x\in K$. We show that $F(x)\subset
S_\varepsilon (\varphi^{-1}(H)(x))$. Let $y\in F(x)$. By Remark 3.1 and Theorem 2.1 for
$\frac\varepsilon 3$ and for every $U\in\mathcal U(x)$ there exist
$x_U\in U\cap IntK_1$ such that $F(x_U)$ is single-valued and $F(x_U)\in
S_{\frac\varepsilon 3}(y)$.  From the fact that $F(x_U)$ is single-valued it  follows that $G(x_U)$ is single-valued too and consequently $F(x_U)=G(x_U)$. Since $H(x_U)\subset S_{\frac\varepsilon 3}(G(x_U))$ we have that $\varphi^{-1}(H)(x_U)\subset S_{\frac\varepsilon 3}(F(x_U))$ and hence there exist $y_U\in \varphi^{-1}(H)(x_U)$ such that $d(y_U,F(x_U))<\frac\varepsilon 3$. Hence $d(y,y_U) < \frac{2\varepsilon} 3$. So there exists a subnet of the net
$\{(x_U,y_U):\ U\in\mathcal U(x)\}$ which converges
to a point $(x,z)$, where $z\in\varphi^{-1}(H)(x)$. So $F(x)\subset
S_\varepsilon (\varphi^{-1}(H)(x))$. The inclusion
$\varphi^{-1}(H)(x)\subset S_\varepsilon (F(x))$ can be  proved similarly.
\end{proof}

The following Example shows that the condition of local compactness in Theorem 3.2 is essential.

\begin{example} Let $X = [-1,1]$ with the topology, where the open sets in $X$ are all subsets of $X$ not containing $0$ and all subsets of $X$ containing $0$ that have countable complement. Every compact set in $X$ is finite. Thus the topology $\tau_{UC}$ on $MU(X,\Bbb R)$ and $MC(X,\Bbb R)$ reduces to the topology $\tau_p$. So we can use Example 3.1.

\end{example}

\begin{theorem}Let $X$ be a Baire space and  $Y$ be a Banach space. The map $\varphi$ from $(MU(X,Y),\tau_U)$ onto
$(MC(X,Y),\tau_U)$ is homeomorphism.
\end{theorem}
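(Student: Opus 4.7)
Continuity of $\varphi$ in the uniform topology is Theorem 3.1, a direct consequence of Lemma 3.1 which gives $\varrho(\varphi(F_1),\varphi(F_2))\le\varrho(F_1,F_2)$. The plan for the remaining direction is to establish the reverse inequality $\varrho(F_1,F_2)\le\varrho(\varphi(F_1),\varphi(F_2))$, so that $\varphi$ is in fact an isometry from $(MU(X,Y),\varrho)$ onto $(MC(X,Y),\varrho)$. Writing $G_i=\varphi(F_i)$ and $\varepsilon=\varrho(G_1,G_2)$, I aim to show $H_d(F_1(x),F_2(x))\le\varepsilon$ for every $x\in X$. Here is where the Baire hypothesis enters: by Remark 3.1 each $F_i$ is single-valued on a dense $G_\delta$-set $E_i\subset X$, and since $X$ is Baire, $E:=E_1\cap E_2$ is still a dense $G_\delta$. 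For $z\in E$ any selection $f_i$ of $F_i$ gives $F_i(z)=G_i(z)=\{f_i(z)\}$, so
\[d(f_1(z),f_2(z))=H_d(G_1(z),G_2(z))\le\varepsilon.\]

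The main step is to promote this pointwise bound on $E$ to a bound at an arbitrary $x\in X$. Fix $y\in F_1(x)$, $\eta>0$, and an open $U\in\mathcal U(x)$. Since $y\in\overline{f_1}(x)$ there is $u\in U$ with $d(f_1(u),y)<\eta/2$; quasicontinuity of $f_1$ at $u$ (Theorem 2.1) yields a nonempty open $W\subset U$ with $f_1(W)\subset S_{\eta/2}(f_1(u))$; density of $E$ then supplies $z\in W\cap E$ with $d(f_1(z),y)<\eta$, and the displayed inequality gives $f_2(z)\in F_2(z)$ with $d(f_2(z),y)<\varepsilon+\eta$. Letting $U$ range over $\mathcal U(x)$ and $\eta\downarrow 0$, I obtain a net $(z_\alpha,w_\alpha)$ in the graph of $F_2$ with $z_\alpha\to x$ and $\limsup d(w_\alpha,y)\le\varepsilon$. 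Upper semicontinuity of $F_2$ together with compactness of $F_2(x)$ is the standard tool for extracting a subnet of $(w_\alpha)$ converging to some $w\in F_2(x)$, whence $d(y,w)\le\varepsilon$. Thus $e_d(F_1(x),F_2(x))\le\varepsilon$; interchanging the roles of $F_1$ and $F_2$ gives the reverse excess, so $H_d(F_1(x),F_2(x))\le\varepsilon$ for every $x\in X$.

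The step I expect to be most delicate is the simultaneous approximation: one needs a single witness $z$ that lies near $x$, makes $f_1(z)$ near $y$, and sits inside $E_1\cap E_2$ so that the uniform bound on the pair $(G_1,G_2)$ translates into a pointwise bound on $(f_1,f_2)$. This is exactly where Baireness of $X$ is used, first through Remark 3.1 and then to intersect the two resulting dense $G_\delta$-sets. Once that witness is in hand, the passage from $f_2(z)\in F_2(z)$ to a point of $F_2(x)$ via the usual usco-plus-compactness extraction of a convergent subnet is routine.
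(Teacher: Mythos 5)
Your proof is correct, and it reaches a conclusion slightly stronger than the theorem: you show that $\varphi$ is an isometry of $(MU(X,Y),\varrho)$ onto $(MC(X,Y),\varrho)$, since Lemma 3.1 gives $\varrho(\varphi(F_1),\varphi(F_2))\le\varrho(F_1,F_2)$ and your argument gives the reverse inequality. The paper itself only says that the proof of this theorem is ``similar to the proof of Theorem 3.2,'' and that proof is organized differently: there one fixes $G\in MC(X,Y)$, sets $F=\varphi^{-1}(G)$, and shows continuity of $\varphi^{-1}$ at $G$ by an $\varepsilon/3$ estimate against an arbitrary nearby $H\in MC(X,Y)$; the Baire hypothesis is used through Remark 3.1 for the single map $F$ only (one finds points $x_U$ near $x$ where $F(x_U)$ is a singleton and close to a given $y\in F(x)$, notes $F(x_U)=G(x_U)$ there, transfers the bound $H(x_U)\subset S_{\varepsilon/3}(G(x_U))$ to $\varphi^{-1}(H)(x_U)\subset S_{\varepsilon/3}(F(x_U))$, and closes with the same usco subnet extraction you invoke), and the reverse inclusion is handled symmetrically using single-valuedness of $\varphi^{-1}(H)$. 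So the machinery is the same in both arguments — Remark 3.1, Theorem 2.1 (quasicontinuous, subcontinuous selections with dense graph), and upper semicontinuity plus compact values to pull a limit point back into the fibre — but you exploit it symmetrically: you intersect the two dense $G_\delta$ sets of single-valuedness of $F_1$ and $F_2$, get the pointwise bound $d(f_1(z),f_2(z))\le\varrho(G_1,G_2)$ on that set, and promote it everywhere. What your packaging buys is a global, quantitative statement (both $\varphi$ and $\varphi^{-1}$ are nonexpansive, hence uniformly continuous) with no case analysis over neighborhoods of a fixed $G$; what the paper's local formulation buys is that the same scheme works for $\tau_{UC}$ under local compactness (Theorem 3.2), where a single global isometry statement is not available. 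All the delicate points you flag — the simultaneous witness $z\in W\cap E_1\cap E_2$ via $\overline{f_1}=F_1$, quasicontinuity at the auxiliary point $u$, and density of $E$ in the Baire space — are handled correctly.
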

\begin{proof}
The proof is similar to the proof of Theorem 3.2.
\end{proof}

\bigskip
\bigskip
\bigskip

In the last part of our paper we will consider the Vietoris topology $V$ on $MU(X,\Bbb R)$ and on $MC(X,\Bbb R)$. First we will consider the Vietoris topology $V$ on the space $CL(X\times\Bbb R)$ of nonempty closed subsets of $X \times \Bbb R$. The basic open subsets of $CL(X \times \Bbb R)$ in $V$
are the subsets of the form

\bigskip

\centerline{$W^+ \cap W_1^- \cap ... \cap W_n^-$,}
\bigskip
where $W, W_1, ..., W_n$ are open subsets of $X \times R$, $W^+ =
\{F \in CL(X \times \Bbb R): F \subset W\}$, and each $W_i^- = \{F \in CL(X \times \Bbb R): F
\cap W_i \ne \emptyset\}$.

\bigskip
Under the identification of every element of $MU(X \times \Bbb R)$ and $MC(X \times \Bbb R)$ with its graph, we can consider $MU(X \times \Bbb R)$ and  $MC(X \times \Bbb R)$ as subsets of $CL(X \times \Bbb R)$. We will consider the induced Vietoris topology $V$ on $MU(X \times \Bbb R)$ and on  $MC(X \times \Bbb R)$.

\bigskip

\begin{theorem} Let $X$ be a locally connected space. The map
$\varphi$ from $(MU(X,\Bbb R),V)$ onto $(MC(X,\Bbb R),V)$ is continuous.

\end{theorem}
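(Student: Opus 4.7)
The plan is: given $F\in MU(X,\Bbb R)$ and a basic Vietoris open set $\mathcal V=W^+\cap W_1^-\cap\dots\cap W_n^-$ containing $\varphi(F)$, construct a basic Vietoris neighborhood $\mathcal U$ of $F$ in $MU(X,\Bbb R)$ with $\varphi(\mathcal U)\subset\mathcal V$. I will treat the upper part $W^+$ and the lower parts $W_i^-$ separately; local connectedness of $X$ enters only in the lower-condition argument, which I expect to be the main obstacle.

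For the upper part, exploit that $Y=\Bbb R$ makes vertical slices of $W$ disjoint unions of open intervals. For each $x\in X$, the compact set $\{x\}\times\varphi(F)(x)$ lies in $W$, so upper semicontinuity of the usco $\varphi(F)$ produces an open $U_x\ni x$ and an open interval $J_x\supset\varphi(F)(x)$ with $\varphi(F)(U_x)\subset J_x$ and $U_x\times J_x\subset W$. Let $V:=\bigcup_x U_x\times J_x$; then $V$ is open, $F\subset\varphi(F)\subset V\subset W$, and I claim $F'\subset V$ implies $\varphi(F')\subset W$. Indeed, for any $x_0\in X$, each $J_{x'}$ with $x_0\in U_{x'}$ is a connected subset of $W_{x_0}$ meeting $\varphi(F)(x_0)$, so it lies in the connected component $C_{x_0}$ of $W_{x_0}\subset\Bbb R$ that contains $\varphi(F)(x_0)$; this $C_{x_0}$ is an open interval. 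Hence $F'(x_0)\subset C_{x_0}$, and convexity of $C_{x_0}$ yields $\varphi(F')(x_0)=[\min F'(x_0),\max F'(x_0)]\subset C_{x_0}\subset W_{x_0}$.

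For each $W_i^-$, pick $(x_i,y_i)\in\varphi(F)\cap W_i$. If $y_i\in F(x_i)$, simply include a hit-condition $(U_i\times J_i)^-$ for a basic rectangle $U_i\times J_i\subset W_i$ about $(x_i,y_i)$: then any $F'$ meeting $U_i\times J_i$ automatically has $\varphi(F')\cap W_i\ne\emptyset$. Otherwise $y_i$ lies strictly between $a_i:=\min F(x_i)$ and $b_i:=\max F(x_i)$, both of which belong to $F(x_i)$. Using local connectedness of $X$, choose a connected open neighborhood $O_i$ of $x_i$ and $\delta_i>0$ with $O_i\times(y_i-\delta_i,y_i+\delta_i)\subset W_i$, and pick $\eta_i>0$ with $a_i+\eta_i<y_i<b_i-\eta_i$. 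Include the two hit-conditions $(O_i\times(a_i-\eta_i,a_i+\eta_i))^-$ and $(O_i\times(b_i-\eta_i,b_i+\eta_i))^-$, which are met by $F$ at $(x_i,a_i)$ and $(x_i,b_i)$. Set $\mathcal U:=V^+$ intersected with all hit-conditions above.

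The decisive step is the verification for a map $F'\in\mathcal U$ in the second (gap) case: I must promote the two separate hit-points near $(x_i,a_i)$ and $(x_i,b_i)$ to a single base point at which $\varphi(F')$ contains $y_i$. For this, observe that the lower and upper envelopes $l_{F'}:=\min F'(\cdot)$ and $u_{F'}:=\max F'(\cdot)$ are, respectively, lower and upper semicontinuous (direct consequences of $F'$ being usco). From the two hit-conditions, there are $x_a,x_b\in O_i$ with $l_{F'}(x_a)<y_i<u_{F'}(x_b)$. The sets $A:=\{x\in O_i:l_{F'}(x)\le y_i\}$ and $B:=\{x\in O_i:u_{F'}(x)\ge y_i\}$ are then relatively closed in $O_i$, both nonempty, and their union is all of $O_i$ since $l_{F'}\le u_{F'}$ pointwise. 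Connectedness of $O_i$ forbids $\{A,B\}$ from being a clopen separation, so $A\cap B\ne\emptyset$; any $x^*\in A\cap B$ satisfies $y_i\in[l_{F'}(x^*),u_{F'}(x^*)]=\varphi(F')(x^*)$, and $(x^*,y_i)\in O_i\times(y_i-\delta_i,y_i+\delta_i)\subset W_i$, giving $\varphi(F')\cap W_i\ne\emptyset$. This closed-cover-of-a-connected-set argument, applied to the envelopes of the usco $F'$, is exactly where the local connectedness hypothesis is indispensable.
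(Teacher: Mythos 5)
Your proposal is correct and follows essentially the same route as the paper's proof: an open tube with interval (connected) vertical slices squeezed between $\varphi(F)$ and $W$ handles the $W^+$ condition, while each $W_i^-$ is handled by a connected neighborhood of $x_i$ (this is where local connectedness enters) together with two hit-conditions near $\min F(x_i)$ and $\max F(x_i)$ and an intermediate-value/connectedness argument. The only difference is cosmetic: where the paper cites Lemma 4.1 of [HJM] for the connected-slice tube and [Be, Proposition 6.2.12] for connectedness of $\varphi(L)(\mathrm{Int}\,C_i)$, you give direct elementary substitutes (tube lemma plus connected components of the slices, and semicontinuity of the envelopes $\min F'$, $\max F'$ with a closed-cover argument on the connected neighborhood).
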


\begin{proof}Let $F\in MU(X,\Bbb R)$ and $W^+ \cap W_1^- \cap ... \cap
W_n^-$ be a basic open set in $(MC(X,\Bbb R),V)$ such that $\varphi(F)\in
W^+ \cap W_1^- \cap ... \cap W_n^-$.

Let $G=\varphi(F)$. By Lemma 4.1 in [HJM] there is an open set
$H \subset X \times \Bbb R$ such that $G \subset H \subset W$ and $H(x)$
is connected for every $x \in X$. Without loss of generality we can
also suppose that for every $i = 1, 2, ... n$,  $W_i \subset H$  and
$W_i = U_i \times V_i$, $U_i$ open in $X$, $V_i$ an open interval in
$\Bbb R$.

For every $i \in \{1,2, ... n\}$ we will define an open set $\mathcal H_i$
as follows. Let $i \in \{1, 2, ... n\}$. Let $(x_i,y_i) \in G \cap
W_i$. If $y_i = inf F(x_i)$ or $y_i = sup F(x_i)$, we will put
$\mathcal H_i = W_i^-$. Otherwise, let $C_i$ be a connected set in $X$ such
that $x_i \in Int C_i \subset C_i \subset U_i$ and $\epsilon > 0$ be
such that $inf F(x_i) + \epsilon < y_i < sup F(x_i) - \epsilon$. Put
$\mathcal H_i =$
\bigskip
$(Int C_i \times (inf F(x_i) - \epsilon,inf F(x_i) + \epsilon))^-
\cap (Int C_i \times (sup F(x_i) - \epsilon,sup F(x_i) +
\epsilon))^-$.

It is easy to verify that $L \in MU(X,\Bbb R) \cap \mathcal H_i$ implies that
$\varphi(L) \in \mathcal H_i$. Since $\varphi(L)$ is upper semicontinuous
set-valued map with connected values, $\varphi(L)(Int C_i)$ must be
a connected set ([Be, Proposition 6.2.12]); i.e. $y_i \in \varphi(L)(Int C_i)$. Thus
$\varphi(L) \in  \cap W_i^-$.

Put $\mathcal G = H^+ \cap \mathcal H_1 \cap ... \cap \mathcal H_n$. Then $F \in \mathcal
G$ and $\varphi(S) \in W^+ \cap W_1^- \cap ... \cap W_n^-$ for every
$S \in \mathcal G$.

\end{proof}

The following Example shows that the condition of local connectedness in the above Theorem is essential.

\begin{example}Let $X = [-1,1]\setminus\{\frac 1n:\ n\in\Bbb
Z^+\}$ with the usual Euclidean topology. Consider the maps $F$ and
$G$ from $X$ to $\Bbb R$ defined by

\begin{center}
$F(x)=\left\{
  \begin{array}{ll}
    1, & \hbox{$x\in X\cap [-1,0)$;} \\
    \{-1,1\}, & \hbox{$x=0$;} \\
    -1, & \hbox{$x\in X\cap (0,1]$.}
  \end{array}
\right.$

\bigskip

$G(x)=\left\{
  \begin{array}{ll}
    1, & \hbox{$x\in X\cap [-1,0)$;} \\
   \hbox{$[-1,1]$}, & \hbox{$x=0$;} \\
    -1, & \hbox{$x\in X\cap (0,1]$.}
  \end{array}
\right.$
\end{center}

 Then $G=\varphi (F)$ and we claim
that $\varphi$ is not continuous at $F$. For every $n \in \Bbb Z^+$ let $f_n$ be the function from $X$ to $\Bbb R$ defined by

\begin{center}
$f_n(x)=\left\{
  \begin{array}{ll}
    1, & \hbox{$x\in X\cap [-1,\frac 1n)$;} \\
    -1, & \hbox{$x\in X\cap (\frac 1n,1]$.}
  \end{array}
\right.$
\end{center}

We have that $f_n=\varphi(f_n)$ for every $n\in\Bbb Z^+$. The
sequence $\{f_n :\ n\in\Bbb Z^+\}$ converges in $(MU(X,\Bbb R),V)$ to $F$,
but $\{f_n :\ n\in\Bbb Z^+\}$ does not converge to $G$ in
$(MC(X,\Bbb R),V)$.
\end{example}

The following example shows that the map $\varphi^{-1}$ from
$(MC([-1,1],\Bbb R),V )$ onto $(MU([-1,1],\Bbb R),V )$ is not
continuous.

\begin{example}Let $X = [-1,1]$ with the usual Euclidean
topology. Let $F$, $G$ be maps from Example 2.2. Then
$F=\varphi^{-1}(G)$ and we claim that $\varphi^{-1}$ is not
continuous at $G$. For every $n \in \Bbb Z^+$ let $g_n$ be the
function from $[-1,1]$ to $\Bbb R$ defined by

\begin{center}

$g_n(x)=\left\{
  \begin{array}{ll}
    1, & \hbox{$x\in [-1,0]$;} \\
    1-2nx, & \hbox{$x\in (0,\frac 1n)$;} \\
    -1, & \hbox{$x\in [\frac 1n,1]$.}
  \end{array}
\right.$

\end{center}
Evidently $g_n=\varphi^{-1}(g_n)$ for every $n\in\Bbb
Z^+$. It is easy to see that the sequence $\{g_n :\ n\in\Bbb Z^+\}$
converges in $(MC([-1,1],\Bbb R),V)$ to $G$, but $\{g_n :\ n\in\Bbb Z^+\}$
does not converges to $F$ in $(MU([-1,1],\Bbb R),V)$.
\end{example}

\vskip 1pc


\begin{thebibliography}{xxxxx}



\bibitem[AB]{1}Aliprantis,~Ch.D.---Border,~K.C.: \textit{Infinite dimensional analysis}, 3rd edition, Springer Verlag Berlin, 2006

\bibitem[Ba]{2}Baire,~R.: \textit{Sur les fonctions des variables reelles}, Ann. Math. Pura Appl. \textbf{3} (1899), 1--122.

\bibitem[Be]{3}Beer,~G.: \textit{Topologies on closed and closed convex sets}, Kluwer Academic Publishers, 1993.

\bibitem[BM]{4}Borwein,~J.M.---Moors,~W.B.:\textit{Essentially smooth Lipschitz functions}, Journal of Functional Analyss \textbf{149} (1997), 305-351.

\bibitem[BZ1]{4}Borwein,~J.M.---Zhu,~Q.J.: \textit{Multifunctional and functional analytic techniques in nonsmooth analysis}, in Nonlinear Analysis, Differential Equations and Control, F.H. Clarke and R.J. Stern, editors, Kluwer Academic Publishers, 1999.

\bibitem[BZ2]{5}Borwein,~J.M.---Zhu,~Q.J.: \textit{Techniques of variational analysis}, Springer, 2005.

\bibitem[CL]{6}Collingwood,~E.F.---Lohwater,~A.J.: \textit{The theory of cluster sets}, Cambridge University Press, 1966.

\bibitem[Ch]{6}Christensen,~J.P.R.: \textit{Theorems of Namioka and R.E. Johnson type for upper semicontinuous and compact valued mappings}, Proc. Amer. Math. Soc. \textbf{86}  (1982), 649--655.

\bibitem[DL]{7}Drewnowski,~L.---Labuda,~I.: \textit{On minimal upper semicontinuous compact valued maps}, Rocky Mountain Journal of Mathematics \textbf{20} (1990), 737--752.

\bibitem[En]{8}Engelking,~R.: \textit{General Topology}, PWN 1977.

\bibitem[Fo]{9}Fort,~M.K.: \textit{Points of continuity of semi-continuous functions}, Publ. Math. Debrecen 2 1951-1952, 100--102.

\bibitem[Fu]{10}Fuller,~R.V.: \textit{Set of points of discontinuity}, Proc. Amer. Math. Soc. \textbf{38} (1973), 193--197.

\bibitem[GM]{11}Giles,~J.R.---Moors,~W.B.: \textit{A continuity property related to Kuratowski's index of non-compactness, its relevance to the drop property, and its implications for differentiability theory}, Journal of Mathematical Analysis and its Applications \textbf{178} (1993), 247--268.

\bibitem[HJM]{12}Hol\'a,~\v L.---Jain,~T.---McCoy,~R.A.: \textit{Topological properties of the multifunction space $L(X)$ of usco maps}, Mathematica Slovaca \textbf{58} (2008), 763-780.

\bibitem[HM]{12}Hammer,~S.T.---McCoy,~R.A.: \textit{Spaces of densely continuous forms}, Set-Valued Anal.  \textbf{5} (1997), 247--266.

\bibitem[HH1]{13}Hol\'a,~\v L.---Hol\'y,~D.: \textit{Minimal usco maps, densely continuous forms and upper semicontinuous functions}, Rocky Mountain Journal of Mathematics \textbf{39} (2009), 545--562.

\bibitem[HH2]{14}Hol\'a,~\v L.---Hol\'y,~D.: \textit{Pointwise convergence of quasicontinuous mappings and Baire spaces}, Rocky Mountain Journal of Mathematics \textbf{41}  (2011), 1883--1894.

\bibitem[HH3]{15}Hol\'a,~\v L.---Hol\'y,~D.: \textit{New characterization of minimal cusco maps}, Rocky Mountain Journal of Mathematics. accepted

\bibitem[HoM]{16}Hol\'y,~\v D.---Matej\'i\v cka,~L.: \textit{Quasicontinuous functions, minimal usco maps and topology of pointwise convergence}, Mathematica Slovaca \textbf{60} (2010), 506-520.

\bibitem[HN]{17}Hol\'a,~\v L.---Novotn\'y,~B.: \textit{Subcontinuity of multifunctions}, Mathematica Slovaca \textbf{62} (2012), 345-362.

\bibitem[HP]{18}Hol\'a,~\v L.---Piotrowski,~Z.: \textit{Set of continuity points of functions with values in generalized metric spaces}, Tatra Mountains Mathematical Publications \textbf{42}  (2009), 148--160.

\bibitem[Ke]{19}Kempisty,~S.: \textit{Sur les fonctions quasicontinues}, Fund. Math. \textbf{19} (1932),  189--197.

\bibitem[KKM]{20}Kenderov,~P.S.---Kortezov,~I.S.---Moors,~W.B.: \textit{Continuity points of quasi-continuous mappings}, Topology and its Applications, \textbf{109}  (2001), 321---346.

\bibitem[LL]{21}Lechicki,~A.---Levi,~S.: \textit{Extensions of semicontinuous multifunctions}, Forum Math. \textbf{2}  (1990), 341--360.


\bibitem[Ma]{22}Matejdes,~M.: \textit{Selection theorems and minimal mappings in a cluster setting}, Rocky Mountain Journal of Mathematics \textbf{41} (2011), 851-857.

\bibitem[Me]{23}Megginson,~R.E.: \textit{An introduction to Banach space theory}, Springer, 1998.

\bibitem[Mi]{24}Milman,~D.P.: \textit{Characteristics of extremal points of regularly convex sets}, Dokl. Akad. Nauk SSSR \textbf{57} (1947), 119-122.

\bibitem[Ne]{25}Neubrunn,~T.: \textit{Quasi-continuity} Real Anal. Exchange \textbf{14} (1998), 259--306.


\bibitem[No]{26}Novotn\'y,~B.: \textit{On subcontinuity} Real Anal. Exchange \textbf{31} (2005), 535-545.


\bibitem[Ph]{27}Phelps,~R. R.: \textit{Convex functions, monotone operators and differentiability}, Lecture Notes in Mathematics \textbf{1364}, Springer-Verlag, Berlin/Heidelberg/New York (1989).


\bibitem[Sp]{28}Spakowski,~A.: \textit{Upper set-convergences and minimal limits}, preprint.

\bibitem[Wa]{29}Wang,~X.: \textit{Asplund sets, differentiability and subdifferentiability of functions in Banach spaces}, Journal of Mathematical Analysis and Applications \textbf{323} (2006), 1417-1429.

\end{thebibliography}
\end{document}